\newtheorem{theorem}{Theorem}[section]
\newtheorem{proposition}[theorem]{Proposition}
\theoremstyle{definition}
\theoremstyle{remark}
\newtheorem{remark}[theorem]{Remark}
\newcommand{\C}{\mathbb{C}}
\newcommand{\A}{\mathbb{A}}
\newcommand{\GL}{\mathrm{GL}}
\newcommand{\SL}{\mathrm{SL}}
\newcommand{\SO}{\mathrm{SO}}
\newcommand{\Sp}{\mathrm{Sp}}
\newcommand{\GSp}{\mathrm{GSp}}
\newcommand{\on}{\operatorname}
\renewcommand{\Re}{\on{Re}}
\author[P. Yan]{Pan Yan}
\address{Department of Mathematics, The University of Arizona, Tucson, AZ 85721, USA}
\email{panyan@math.arizona.edu}
\date{\today}
\title{A note on a Hecke type integral for $\Sp(2n)\times \GL(1)$}
\subjclass[2020]{Primary 11F70; Secondary 22E55}
\keywords{Rankin-Selberg method, automorphic $L$-functions, Whittaker model}
\begin{document}

\begin{abstract} 
Let $\pi$ be an irreducible cuspidal automorphic generic representation of $\Sp_{2n}(\A)$ and let $\chi:F^\times\backslash \A^\times\to \mathbb{C}^\times$ be a unitary idele class character. In this 
note,
we present a Rankin-Selberg integral of Hecke type for the twisted standard partial $L$-function $L^S(s,\pi\times\chi)$ of degree $2n+1$.
\end{abstract}

\maketitle

\section{Introduction}
\label{section-introduction}
Zeta functions or $L$-functions are one of the fundamental objects of interest in number theory. A common way to study $L$-functions is through an integral representation, which has its origin in Riemann's paper on the Riemann $\zeta$-function. For the $L$-function $L(s, f)=\sum_{n=1}^\infty a_n {n^{-s}}$ associated to a cusp form $f(z)=\sum_{n=1}^\infty a_n e^{2\pi i n z}$ on the upper half plane, Hecke \cite{MR1513069} used the Mellin transform of $f$ to establish an integral representation
\begin{equation}
\label{eq-Hecke-classical-integral}
    \Lambda(s, f)=\int_0^\infty f(iy)y^s d^\times y=\int_0^\infty f\left(\begin{pmatrix} y & \\ & 1\end{pmatrix}\cdot i\right) y^s d^\times y,
\end{equation}
where $\Lambda(s, f)=(2\pi)^{-s}\Gamma(s)L(s, f)$ is the completed $L$-function, and $\begin{pmatrix} y & \\ & 1\end{pmatrix}\cdot i$ denotes the action by linear fractional transformation. This allowed Hecke to derive analytic properties of $\Lambda(s, f)$.

$L$-functions of automorphic representations were first developed by Jacquet and Langlands \cite{MR0401654} for $\GL_2$. Let $F$ be a number field with the ring of adeles $\mathbb{A}$. Let $(\pi, V_\pi)$ be an irreducible automorphic cuspidal representation of $\GL_2(\A)$ and let $\chi:F^\times\backslash \A^\times\to \C^\times$ be a unitary idele class character. Let $\psi:F\backslash \A\to \C^\times$ be a non-trivial additive character. Let $\varphi\in V_\pi$ be a non-zero cusp form. Following Hecke's integral \eqref{eq-Hecke-classical-integral} where we embed $\GL_1$ into the Levi subgroup of the mirobolic subgroup of $\GL_2$,
Jacquet and Langlands considered the integral 
\begin{equation}
\label{eq-GL(2)xGL(1)-global-integral}
I_2(s, \varphi, \chi)=\int\limits_{F^\times\backslash \A^\times} \varphi\begin{pmatrix}
t & \\
& 1
\end{pmatrix} \chi(t)|t|^{s-1}d^\times t.
\end{equation}
Replacing the cusp form $\varphi$ by its Fourier expansion
\begin{equation*}
    \varphi(g)=\sum_{\gamma\in F^\times } W_{\varphi, \psi} \left( \begin{pmatrix}
    \gamma & \\ & 1
    \end{pmatrix} g\right)
\end{equation*}
where $W_{\varphi, \psi}$ is the Whittaker function of $\varphi$ given by
\begin{equation*}
    W_{\varphi, \psi}(g)=\int\limits_{F\backslash \A}\varphi\left(\begin{pmatrix} 1 & x\\ & 1\end{pmatrix}g\right)\psi^{-1}(x)dx,
\end{equation*}
we see that the integral $I_2(s, \varphi, \chi)$ unfolds to (for $\Re(s)\gg 0$)
\begin{equation}
\label{eq-GL(2)xGL(1)-global-integral-unfolding}
    I_2(s, \varphi, \chi)=\int\limits_{\A^\times} W_{\varphi, \psi}\begin{pmatrix}
t & \\
& 1
\end{pmatrix} \chi(t)|t|^{s-1}d^\times t.
\end{equation}
By the uniqueness of the Whittaker model \cite{MR348047}, the Whittaker function $W_{\varphi, \psi}$ is factorizable and consequently the integral $I_2(s, \varphi, \chi)$ is Eulerian. This allows us to factor the integral \eqref{eq-GL(2)xGL(1)-global-integral-unfolding} and study the local integral corresponding to \eqref{eq-GL(2)xGL(1)-global-integral-unfolding}, and through the unramified computation we can calculate the local unramified integral explicitly to get the local $L$-function. This process gives us an integral representation for $L^S(s, \pi\times \chi)$ where $S$ is a finite set of places outside of which the data are unramified and normalized.

\begin{remark}
For $\SL_2\times\GL_1$, one can modify the integral \eqref{eq-GL(2)xGL(1)-global-integral}, to obtain an integral representation for $L(s, \pi\times\chi)$ where $\pi$ is an irreducible automorphic cuspidal representation of $\SL_2(\A)$ as follows. For $\varphi\in V_\pi$ a non-zero cusp form on $\SL_2(\A)$, we consider the integral
\begin{equation}
\label{eq-SL(2)xGL(1)-global-integral}
J(s, \varphi, \chi)=\int\limits_{F^\times\backslash \A^\times} \varphi\begin{pmatrix}
t & \\
& t^{-1}
\end{pmatrix} \chi(t)|t|^{s-1}d^\times t.
\end{equation}
and it can be shown that (for $\Re(s)\gg 0$)
\begin{equation*}
J(s, \varphi, \chi)= \int\limits_{\A^\times} W_{\varphi, \psi}\begin{pmatrix}
t & \\
& t^{-1}
\end{pmatrix} \chi(t)|t|^{s-1}d^\times t,
\end{equation*}
where $W_{\varphi, \psi}$ is the Whittaker function of $\varphi$. Here, in \eqref{eq-SL(2)xGL(1)-global-integral}, we embed $\GL_1$ into the Levi subgroup of the standard Borel subgroup of $\SL_2$. 
\end{remark}

To generalize the integral $I_2(s, \varphi, \chi)$ for $\GL_2\times\GL_1$ to $\GL_n\times \GL_1$, one needs to apply a projection operator from cusp forms on $\GL_n$ to cusp forms on the mirabolic subgroup of $\GL_2$. This projection is realized by taking certain Fourier coefficient. When $n=2$, this projection is the identity operator. Let $N_{\GL_n}$ be the maximal unipotent subgroup of $\GL_n$, and let $N_{\GL_n}^0=\{(u_{i,j})\in N_{\GL_n}: u_{1,2}=0\}$. We extend the character $\psi$ on $F\backslash \A$ to $N_{\GL_n}(F)\backslash N_{\GL_n}(\A)$ by \begin{equation*}
    \psi(u)=\psi(u_{1,2}+\cdots + u_{n-1,n})
\end{equation*}for $u=(u_{i,j})\in N_{\GL_n}(\A)$. For a non-zero cusp form $\varphi\in V_\pi$  in the space of an irreducible automorphic cuspidal representation of $\GL_n(\A)$, and a unitary idele class character $\chi$, we consider the integral
\begin{equation}
\label{eq-GL(n)xGL(1)-global-integral}
I_n(s, \varphi, \chi)=\int\limits_{F^\times\backslash \A^\times}\int\limits_{N_{\GL_n}^0(F)\backslash N_{\GL_n}^0(\A)} \varphi\left( u \begin{pmatrix} 
t& & \\ & 1 & \\ & & I_{n-2}\end{pmatrix} \right) \psi^{-1}(u) \chi(t)|t|^{s-\frac{n-1}{2}} du d^\times t.
\end{equation}
The integral $I_n(s, \varphi, \chi)$ unfolds to (when $\Re(s)\gg 0$)
\begin{equation}
\label{eq-GL(n)xGL(1)-global-integral-unfolding}
\begin{split}
    \int\limits_{\A^\times} W_{\varphi, \psi} \left(\begin{pmatrix} 
t & \\ & I_{n-1}\end{pmatrix} \right) \chi(t)|t|^{s-\frac{n-1}{2}}d^\times t
\end{split}
\end{equation}
where
\begin{equation*}
    W_{\varphi, \psi}(g)=\int\limits_{N_{\GL_n}(F)\backslash N_{\GL_n}(\A)}\varphi(ug)\psi^{-1}(u)du,
\end{equation*}
and it represents $L^S(s, \pi\times\chi)$.
More generally, the theory of $L$-functions for $\GL_n\times\GL_m$ was developed in a series of papers \cite{MR618323, MR623137}.
For more details we refer the readers to the wonderful lecture notes \cite{MR2071506}.

The purpose of this paper is to present an integral representation of Hecke type, similar to \eqref{eq-GL(n)xGL(1)-global-integral}, for the twisted standard $L$-function for $\Sp_{2n}\times \GL_1$ of degree $2n+1$. 
Let $\pi$ be an irreducible automorphic cuspidal representation of $\Sp_{2n}(\A)$. Throughout this paper, we always assume $\pi$ is generic. To recall this notion, let $N$ be the standard maximal unipotent subgroup of $\Sp_{2n}$. We define a character on $N(F)\backslash N(\A)$, denoted as $\psi_N$, by
\begin{equation*}
    \psi_N(u)=\psi(\sum_{i=1}^n u_{i, i+1}), \, u=(u_{i,j})\in N(\A).
\end{equation*}
Then $\pi$ is generic if the space of functions
\begin{equation*}
    W_{\varphi, \psi_N}(g)=\int\limits_{N(F)\backslash N(\A)} \varphi(ug)\psi_N^{-1}(u)du, \, g\in \Sp_{2n}(\A),
\end{equation*}
as $\varphi\in V_\pi$ varies, is not zero. 
Let $U$ be the following subgroup of $N$:
\begin{equation*}
    U=\left\{ u=(u_{i,j})\in N: u_{1,2}=u_{2n-1,2n}=0 \right\}.
\end{equation*}
Let $\alpha:\GL_1\to \Sp_{2n}$ be the embedding
\begin{equation*}
    \alpha(t)=\begin{pmatrix} t & &\\
    & I_{2n-2} & \\ & & t^{-1}\end{pmatrix}.
\end{equation*}
For a non-zero cusp form $\varphi\in V_\pi$ and a unitary idele class character $\chi$, we consider the following integral
\begin{equation}
\label{eq-global-integral}
\mathcal{Z}(s, \varphi, \chi)=\int\limits_{F^\times\backslash \A^\times} \int\limits_{U(F)\backslash U(\A)}\varphi( u\alpha(t)) \psi_N^{-1}(u)  \chi(t) |t|^{2s-n-\frac{1}{2}} dud^\times t.
\end{equation}

In this note we prove the following theorem. 
\begin{theorem}
\label{theorem-main}
Let $\pi$ be an irreducible automorphic cuspidal generic representation of $\Sp_{2n}(\A)$ and let $\chi:F^\times\backslash \A^\times\to \C^\times$ be a unitary idele class character. Suppose $\varphi\in V_\pi$ is a non-zero cusp form which corresponds to $\otimes_v \xi_v$ under the tensor product decomposition $\pi\cong \otimes_v^\prime \pi_v$. Then
\begin{equation*}
    \mathcal{Z}(s, \varphi, \chi)=\frac{L^S(2s-\frac{1}{2}, \pi\times\chi)}{L^S(4s-1, \chi)}\cdot \mathcal{Z}_S(s, \varphi, \chi),
\end{equation*}
where $S$ is a finite set of places (including all the Archimedean ones such that for $v\not\in S$, $\pi_v$, $\chi_v$ and $\psi_v$ are unramified) and $\mathcal{Z}_S(s, \varphi, \chi)$ is a meromorphic function.
Moreover, for any $s_0\in \C$, the data can be chosen so that $\mathcal{Z}_S(s_0, \varphi, \chi)\not=0$.
\end{theorem}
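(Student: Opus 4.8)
The plan is to run the three standard steps of the Rankin--Selberg method: unfold $\mathcal Z(s,\varphi,\chi)$ to an adelic integral of a Whittaker function, factor it as an Euler product, evaluate the unramified local factors; the last assertion of the theorem is then a purely local non-vanishing statement at the finitely many places in $S$.

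\emph{Unfolding.} Set $\phi(g)=\int_{U(F)\backslash U(\A)}\varphi(ug)\psi_N^{-1}(u)\,du$, so that $\mathcal Z(s,\varphi,\chi)=\int_{F^\times\backslash\A^\times}\phi(\alpha(t))\,\chi(t)|t|^{2s-n-\frac{1}{2}}\,d^\times t$; since $\varphi$ is a cusp form this converges for all $s$. Let $V\subset N$ be the one-dimensional root subgroup attached to $e_1-e_2$, so that $U\triangleleft N$, $N/U\cong V$, and $\alpha(\gamma)$ scales $V$ by $\gamma$ while normalizing $(U,\psi_N|_U)$. One checks that $\phi$ is left $V(F)$-invariant, then expands $x\mapsto\phi(v(x)g)$ in a Fourier series over $V(F)\backslash V(\A)\cong F\backslash\A$. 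After a change of variables using $\alpha(\gamma)\in\Sp_{2n}(F)$, the non-constant coefficients equal $W_{\varphi,\psi_N}(\alpha(\gamma)g)$ for $\gamma\in F^\times$, while the constant term equals $\int_{N(F)\backslash N(\A)}\varphi(ug)\psi'^{-1}(u)\,du$ for the degenerate character $\psi'(u)=\psi(\sum_{i=2}^n u_{i,i+1})$. Since $\psi'$ is trivial on the unipotent radical of the standard maximal parabolic of $\Sp_{2n}$ with Levi $\GL_1\times\Sp_{2n-2}$, this integral factors through the constant term of $\varphi$ along that parabolic and vanishes by cuspidality. Setting $x=0$ gives $\phi(g)=\sum_{\gamma\in F^\times}W_{\varphi,\psi_N}(\alpha(\gamma)g)$, and collapsing this sum against $\int_{F^\times\backslash\A^\times}$ yields, for $\Re(s)$ large,
\[
\mathcal Z(s,\varphi,\chi)=\int_{\A^\times}W_{\varphi,\psi_N}(\alpha(t))\,\chi(t)|t|^{2s-n-\frac{1}{2}}\,d^\times t,
\]
the interchange being justified in that range by the standard growth bounds for Whittaker functions of cusp forms.

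\emph{Euler product and the unramified factor.} Uniqueness of the local Whittaker model gives $W_{\varphi,\psi_N}=\prod_v W_{\xi_v,\psi_{N,v}}$, hence $\mathcal Z(s,\varphi,\chi)=\prod_v \mathcal Z_v(s,\xi_v,\chi_v)$ with $\mathcal Z_v(s,\xi_v,\chi_v)=\int_{F_v^\times}W_{\xi_v,\psi_{N,v}}(\alpha(t))\,\chi_v(t)|t|_v^{2s-n-\frac{1}{2}}\,d^\times t$, convergent for $\Re(s)$ large and with the product convergent in a common half-plane. For $v\notin S$ take $\xi_v$ spherical with $W_{\xi_v,\psi_{N,v}}(e)=1$ and measures normalized so that $\on{vol}(\mathcal O_v^\times)=1$; since $\alpha(t)$ is trivial on all simple root subgroups except the one for $e_1-e_2$, the Casselman--Shalika formula gives $W_{\xi_v,\psi_{N,v}}(\alpha(\varpi_v^k))=q_v^{-nk}\,\mathrm{ch}_k(t_{\pi_v})$ for $k\ge0$ and $0$ otherwise, where $t_{\pi_v}\in\SO_{2n+1}(\C)$ is the Satake parameter and $\mathrm{ch}_k$ is the character of the irreducible $\SO_{2n+1}(\C)$-representation of highest weight $k\varpi_1$ (the $k$-th harmonic summand of the $k$-th symmetric power of the standard representation). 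With $X=\chi_v(\varpi_v)q_v^{-(2s-\frac{1}{2})}$ one gets
\[
\mathcal Z_v(s,\xi_v,\chi_v)=\sum_{k\ge0}\mathrm{ch}_k(t_{\pi_v})\,X^k=\frac{1-X^2}{\det(1-X\rho_{2n+1}(t_{\pi_v}))},
\]
using the identity $\sum_{k\ge0}\mathrm{ch}_k(t)X^k=(1-X^2)/\det(1-X\rho_{2n+1}(t))$, which encodes the decomposition of symmetric powers of the standard representation $\rho_{2n+1}$ of $\SO_{2n+1}(\C)$. Here $\det(1-X\rho_{2n+1}(t_{\pi_v}))^{-1}=L_v(2s-\frac{1}{2},\pi_v\times\chi_v)$, while $1-X^2$ matches the $\GL_1$ Euler factor in the denominator of the statement, giving $\mathcal Z_v(s,\xi_v,\chi_v)=L_v(2s-\frac{1}{2},\pi_v\times\chi_v)/L_v(4s-1,\chi_v)$ for $v\notin S$. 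Collecting these over $v\notin S$ produces $L^S(2s-\frac{1}{2},\pi\times\chi)/L^S(4s-1,\chi)$, and we set $\mathcal Z_S(s,\varphi,\chi)=\prod_{v\in S}\mathcal Z_v(s,\xi_v,\chi_v)$; this is meromorphic by the meromorphic continuation of the local zeta integrals (rationality in $q_v^{-s}$ at the finite places, standard Archimedean theory at the infinite ones), so the stated identity holds for $\Re(s)$ large and then as an identity of meromorphic functions.

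\emph{Non-vanishing at $s_0$.} Fix $s_0\in\C$; it suffices to choose, for each $v\in S$, a local datum with $\mathcal Z_v(s_0,\xi_v,\chi_v)\neq0$. Using that the restriction of the Whittaker model to a small neighbourhood of the identity in the torus is rich enough to prescribe values there (at Archimedean places one works with smooth vectors), choose $W_{\xi_v,\psi_{N,v}}$ supported, in the $\alpha$-direction, on cosets near the identity and equal there to $\overline{\chi_v}$ times a non-negative bump; then $\mathcal Z_v(s_0,\xi_v,\chi_v)$ reduces to a single non-zero integral over (a neighbourhood of $1$ in) $\mathcal O_v^\times$ --- a finite sum at the finite places --- with no convergence issue, hence is non-zero at $s=s_0$. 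Taking spherical data for $v\notin S$ and enlarging the finite set $S$ if necessary yields $\mathcal Z_S(s_0,\varphi,\chi)\neq0$. The main obstacle is the unfolding step: one must verify carefully that the constant term of the Fourier expansion along $V$ is exactly the integral against $\psi'$, that $\psi'$ is trivial on the radical of the parabolic with Levi $\GL_1\times\Sp_{2n-2}$ so that cuspidality applies, and that the remaining coefficients are indexed by $F^\times$ via $\alpha$; once this is done, the unramified computation is a direct application of the Casselman--Shalika formula together with the representation theory of $\SO_{2n+1}(\C)$, and the factorization and the final non-vanishing are routine.
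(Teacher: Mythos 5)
Your unfolding section follows essentially the same route as the paper's Proposition~\ref{proposition-unfolding-identity} and Theorem~\ref{theorem-global-unfolding}: Fourier expansion along the $e_1-e_2$ root subgroup, vanishing of the constant coefficient by cuspidality through the maximal parabolic with Levi $\GL_1\times\Sp_{2n-2}$ (the paper phrases this as the inner integration over the full first row), translation of the nontrivial coefficients by $\alpha(\xi)$, and collapsing the $\xi$-sum against the $F^\times\backslash\A^\times$-integration; the factorization via uniqueness of the Whittaker model is also identical. Where you genuinely differ is the unramified computation: the paper simply cites \cite{MR1675971}, while you carry out the Casselman--Shalika computation directly. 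That is a legitimate and more self-contained route, but it is precisely where your argument breaks.

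With $X=\chi_v(\varpi_v)q_v^{-(2s-\frac12)}$, your (correct) computation gives $\mathcal Z_v=(1-X^2)/\det\bigl(1-X\rho_{2n+1}(t_{\pi_v})\bigr)$, and
\begin{equation*}
1-X^2 \;=\; 1-\chi_v^2(\varpi_v)\,q_v^{-(4s-1)} \;=\; L(4s-1,\chi_v^2)^{-1},
\end{equation*}
which is \emph{not} $L(4s-1,\chi_v)^{-1}=1-\chi_v(\varpi_v)q_v^{-(4s-1)}$ unless $\chi_v(\varpi_v)=1$. So the sentence ``$1-X^2$ matches the $\GL_1$ Euler factor in the denominator of the statement'' is false for a general unitary $\chi$, and it is exactly the step where your proof is supposed to land on the displayed formula of the theorem. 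As written, your computation yields $L^S(2s-\frac12,\pi\times\chi)/L^S(4s-1,\chi^2)$ rather than the stated $L^S(2s-\frac12,\pi\times\chi)/L^S(4s-1,\chi)$. Your Casselman--Shalika input is sound (the decomposition $\mathrm{Sym}^k\C^{2n+1}\cong\bigoplus_{j\ge0}V_{(k-2j)\varpi_1}$ does give $\sum_{k\ge0}\mathrm{ch}_k(t)X^k=(1-X^2)/\det(1-X\rho_{2n+1}(t))$, and $\delta_B^{1/2}(\alpha(\varpi_v^k))=q_v^{-nk}$), so the discrepancy is real and cannot be asserted away: either the denominator in the statement should be read as $L^S(4s-1,\chi^2)$, or you must locate an error in your computation --- in either case the mismatch has to be addressed explicitly, not identified silently.

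Two smaller points. At archimedean places you cannot literally prescribe the restriction of a Whittaker function to the torus as $\overline{\chi_v}$ times a bump; the standard fix (and what underlies \cite[Prop.~3.6]{MR1675971}, which the paper cites) is to deform $W$ by integrating against a Schwartz function along the $e_1-e_2$ root subgroup, or to argue by contradiction from $W(I_{2n})\neq0$, so your non-vanishing sketch needs that extra ingredient. Also, your claim that the pre-unfolded integral converges for all $s$ ``since $\varphi$ is a cusp form'' is unjustified as stated and unnecessary: the paper only uses absolute convergence for $\Re(s)\gg0$ together with meromorphic continuation.
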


\begin{remark}
When $n=1$, our integral becomes
\begin{equation*}
    \int\limits_{F^\times\backslash \A^\times} \varphi\left(\begin{smallmatrix} t&\\ & t^{-1}\end{smallmatrix}\right)\chi(t)|t|^{2s-\frac{3}{2}}d^\times t.
\end{equation*}
This is the integral \eqref{eq-SL(2)xGL(1)-global-integral} after a change of variable in $s$.
\end{remark}

\begin{remark}
When $n=2$, our integral becomes
\begin{equation}
\label{eq-Sp(4)xGL(1)-global-integral}
\int\limits_{F^\times\backslash \A^\times} \int\limits_{F^3\backslash \A^3} \varphi\left( \begin{pmatrix}
1 & & y & z\\
  & 1 &w &y\\
  & & 1 & \\
  &&&1
\end{pmatrix} \begin{pmatrix}
t & & &\\
&1 &&\\
&&1 & \\
&&&t^{-1}
\end{pmatrix}  \right)\psi^{-1}(w)\chi(t)|t|^{2s-\frac{5}{2}}dydzdwd^\times t,
\end{equation}
which represents the degree five twisted standard $L$-function for $\Sp_4\times\GL_1$.
It is interesting to compare \eqref{eq-Sp(4)xGL(1)-global-integral} with Novodvorsky's integral (\cite{MR546610}, see also \cite[Section 3.3]{MR993311})
\begin{equation}
\label{eq-GSp(4)xGL(1)-global-integral}
\int\limits_{F^\times\backslash \A^\times} \int\limits_{F^3\backslash \A^3} \phi\left( \begin{pmatrix}
1 &y &  & z\\
  & 1 & &\\
  & w& 1 &-y \\
  &&&1
\end{pmatrix} \begin{pmatrix}
t & & &\\
&t &&\\
&&1 & \\
&&&1
\end{pmatrix}  \right)\psi^{-1}(y)\chi(t)|t|^{s-\frac{1}{2}}dydzdwd^\times t
\end{equation}
where $\phi$ is a generic cusp form on $\GSp_4(\A)$,
which represents the degree four twisted Spin $L$-function for $\GSp_4\times\GL_1$.
\end{remark}

We emphasize that $L$-functions for symplectic groups for generic representations are very well-understood, for example from \cite{MR1454260} and \cite{MR1675971} in which Ginzburg, Rallis and Soudry constructed a family of Shimura type integrals which involve theta series. Our goal is to impress the reader the similarity between $\GL_n\times\GL_1$ and $\Sp_{2n}\times\GL_1$ in the context of Hecke type integrals.

Once formulated, Theorem~\ref{theorem-main} follows immediately from the standard procedure in the theory of integral representations of $L$-function. The global unfolding computation is discussed in Theorem~\ref{theorem-global-unfolding}. The local unramified computation is given in Proposition~\ref{proposition-local-unramified-computation}. The non-vanishing of the local integrals at ramified and archimedean places is discussed in Proposition~\ref{proposition-local-non-vanishing}. We remark that Proposition~\ref{proposition-local-unramified-computation} and Proposition~\ref{proposition-local-non-vanishing} can be deduced from the work in \cite{MR1675971}. Thus our main contribution in this paper is the global unfolding computation in Section~\ref{section-global-computation}.

\section{Global unfolding}
\label{section-global-computation}

In this section, we prove some basic global results about the integral $\mathcal{Z}(s,\varphi, \chi)$.

\begin{theorem}
\label{theorem-global-unfolding}
The integral $\mathcal{Z}(s,\varphi, \chi)$ converges absolutely when $\Re(s)\gg 0$ and can be meromorphically continued to all $s\in \C$. Moreover, when $\Re(s)\gg 0$, the integral $\mathcal{Z}(s,\varphi, \chi)$ unfolds to 
\begin{equation}
\label{eq-unfolding}
    \int\limits_{\A^\times} W_{\varphi,\psi_N}(\alpha(t)) \chi(t) |t|^{2s-n-\frac{1}{2}} d^\times t.
\end{equation}
\end{theorem}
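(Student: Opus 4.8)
The plan is to carry out the standard unfolding procedure from the theory of integral representations. First I would establish absolute convergence for $\Re(s) \gg 0$: since $\varphi$ is a cusp form, it is rapidly decreasing on $\Sp_{2n}(F)\backslash \Sp_{2n}(\A)$, and the integration over $U(F)\backslash U(\A)$ is over a compact domain, so convergence is controlled by the behavior of $\varphi(u\alpha(t))$ as $|t| \to 0$ and $|t| \to \infty$. Rapid decay of $\varphi$ on a Siegel set gives decay in $|t|$ faster than any power, while for $|t|\to 0$ one uses the $\psi_N$-equivariance together with cuspidality (or simply the boundedness of $\varphi$) to control the contribution; hence the integral converges absolutely for all $s$ in the region where $|t|^{2s-n-\frac12}$ does not spoil integrability near $|t|=0$, i.e.\ $\Re(s) \gg 0$. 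The meromorphic continuation will then follow a posteriori from the unfolded Eulerian expression together with the local computations (Proposition~\ref{proposition-local-unramified-computation} and Proposition~\ref{proposition-local-non-vanishing}), since these identify $\mathcal{Z}(s,\varphi,\chi)$ with a ratio of $L$-functions times a finite product of local integrals.

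The core of the argument is the unfolding. The key input is the Fourier expansion of the cusp form $\varphi$ along the abelian quotient obtained from $U$ inside $N$. Concretely, write $N = U \cdot X$ where $X$ is the one-parameter root subgroup corresponding to the entry $u_{1,2}$ (and its symplectic partner $u_{2n-1,2n}$, which is forced to be equal or related by the symplectic condition, so effectively one extra root coordinate in the Levi $\GL_1 \times \Sp_{2n-2}$-orbit picture). The quotient $N^0 \backslash N$ where $N^0$ is $U$ is abelian, and $\varphi$ restricted to this, as a function on a compact abelian group, has a Fourier expansion indexed by $F$ (via the action of the $\GL_1$-torus $\alpha(t)$ one upgrades from $F/\{0\}$ orbits). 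Plugging this expansion into $\mathcal{Z}(s,\varphi,\chi)$, the trivial Fourier coefficient vanishes by cuspidality (it corresponds to a constant term along a parabolic), and the nontrivial coefficients are permuted transitively by $F^\times$ acting through $\alpha(t)$; this collapses the sum-and-quotient-by-$F^\times$ into a single term with the quotient $F^\times\backslash\A^\times$ opened up to $\A^\times$. One then recognizes the resulting inner integral over the full $N(F)\backslash N(\A)$ against $\psi_N^{-1}$ as exactly the Whittaker function $W_{\varphi,\psi_N}$, yielding \eqref{eq-unfolding}. The manipulation is legitimate because absolute convergence (established in the first step) permits the interchange of summation and integration.

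I expect the main obstacle to be the bookkeeping in the Fourier expansion step: one must check carefully that conjugation by $\alpha(t)$ acts on the relevant root coordinate by a single power of $t$ (so that the $F^\times$-orbit on the nontrivial characters is indeed a single orbit with trivial stabilizer), and that the symplectic constraint $u_{1,2} = 0 = u_{2n-1,2n}$ defining $U$ is exactly the complement of the coordinate needed to build up $\psi_N$ from $\psi_N|_U$. There is also a small subtlety that for $n=1$ the group $U$ is trivial and the statement degenerates to the already-known $\SL_2$ case, so the inductive/structural description should be phrased to include that base case. The convergence and interchange arguments are routine given the rapid decay of cusp forms, so I would state them briefly and concentrate the write-up on the root-theoretic identification and the orbit computation.
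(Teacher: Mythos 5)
Your proposal follows essentially the same route as the paper: Fourier-expand along the one-dimensional quotient $U\backslash N$ (the $u_{1,2}$-coordinate and its symplectic partner), kill the trivial coefficient by cuspidality via the constant term along the maximal parabolic with Levi $\GL_1\times\Sp_{2n-2}$, use left-invariance under $\alpha(\xi)$, $\xi\in F^\times$, to identify the nontrivial coefficients with $W_{\varphi,\psi_N}(\alpha(\xi)g)$, and then collapse the $\xi$-sum with the quotient $F^\times\backslash\A^\times$ — exactly the content of Proposition~\ref{proposition-unfolding-identity} and the ensuing argument. The only cosmetic difference is that the paper handles convergence and meromorphic continuation by citing \cite{MR1041001}, while you obtain the continuation a posteriori from the unfolded Eulerian expression and the local results, which is equally legitimate.
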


The first statement is standard and we omit the details; see for example \cite[Theorem A(a)]{MR1041001}.
To prove the second statement, we first prove the following identity.
\begin{proposition}
\label{proposition-unfolding-identity}
For $g\in \Sp_{2n}(\A)$, we have
\begin{equation}
\label{eq-unfolding-identity}
\int\limits_{U(F)\backslash U(\A)} \varphi(ug)\psi_N^{-1}(u)du = \sum_{\xi\in F^\times} W_{\varphi,\psi_N}(\alpha(\xi) g).
\end{equation}
\end{proposition}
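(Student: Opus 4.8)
The plan is to establish \eqref{eq-unfolding-identity} by expanding the cusp form $\varphi$ along a well-chosen sequence of unipotent subgroups, starting from the Fourier expansion attached to the character $\psi_N$ along the full maximal unipotent $N$, and then collapsing the resulting sum using cuspidality. Concretely, write $N = U \cdot N'$ where $N'$ accounts for the two "missing" root coordinates $u_{1,2}$ and $u_{2n-1,2n}$ (more precisely, I would work one root at a time, since $U$ is obtained from $N$ by deleting two simple-root coordinates). The left-hand side of \eqref{eq-unfolding-identity} is the integral of $\varphi$ against $\psi_N^{-1}$ over $U(F)\backslash U(\A)$. First I would expand this in the $u_{1,2}$-variable: Fourier-expanding along the one-parameter subgroup $X_{1,2}$ over $F\backslash\A$ produces a sum over characters indexed by $\xi_1 \in F$, and conjugating each nontrivial character back to the standard one by an element of the torus in the $\GL_1$-factor sitting in the $(1,2)$-block yields a sum over $\xi_1 \in F$ of Whittaker-type integrals now taken over the larger unipotent group; the $\xi_1 = 0$ term survives as a term where $\varphi$ is integrated over a unipotent radical of a maximal parabolic, and this term vanishes by the cuspidality of $\varphi$ (it contains a full "column" unipotent integral). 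I would do the symmetric thing at the $u_{2n-1,2n}$-coordinate.

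The key bookkeeping step is to track exactly which torus element effects the conjugation. Deleting the coordinate $u_{1,2}$ and then restoring it via Fourier expansion, the relevant conjugation is by $\mathrm{diag}(\xi_1, \xi_1^{-1}, 1, \dots)$ inside a $\GL_2$ (or rather by the cocharacter that scales the $(1,2)$-entry), and similarly at the other end by a cocharacter scaling the $(2n-1,2n)$-entry; composing these two, together with the Weyl-group symmetry of $\Sp_{2n}$ which identifies these two cocharacters with powers of the single cocharacter $\alpha$, one sees that the combined conjugating element is exactly $\alpha(\xi)$ for $\xi \in F^\times$ (the product of the two independent $F^\times$-parameters collapses to a single one because of the relation imposed by the symplectic form — the entry in position $(1,2)$ and the entry in position $(2n-1,2n)$ are linked). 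This is where I expect the main subtlety: verifying that the naive double sum over $(\xi_1, \xi_{2}) \in F^\times \times F^\times$ that one gets from expanding at both ends actually reduces to a single sum over $\xi \in F^\times$, and that the surviving parameter conjugates precisely to $\alpha(\xi)$ rather than to some other torus element. The vanishing of the "constant terms" along the way must be justified by exhibiting, in each case, an abelian unipotent subgroup over which $\varphi$ integrates to zero — i.e., by identifying it as (part of) the unipotent radical of a proper parabolic subgroup of $\Sp_{2n}$ and invoking the definition of a cusp form.

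Granting the identity \eqref{eq-unfolding-identity}, the second statement of Theorem~\ref{theorem-global-unfolding} follows by substituting $g = \alpha(t)$ into \eqref{eq-unfolding-identity}, plugging the result into the definition \eqref{eq-global-integral} of $\mathcal{Z}(s,\varphi,\chi)$, and unfolding the sum over $\xi \in F^\times$ against the integral over $F^\times \backslash \A^\times$ in the standard way: since $\alpha(\xi)\alpha(t) = \alpha(\xi t)$, the sum over $\xi \in F^\times$ combines with $\int_{F^\times\backslash\A^\times}$ to give $\int_{\A^\times}$, and one checks that $\chi(t)|t|^{2s-n-1/2}$ is compatible with this collapse because $\chi$ is an idele class character (trivial on $F^\times$) and $|\cdot|$ is likewise trivial on $F^\times$. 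This interchange of sum and integral is justified for $\Re(s) \gg 0$ by the absolute convergence asserted in the first (standard) part of the theorem, together with the rapid decay of the Whittaker function $W_{\varphi,\psi_N}(\alpha(t))$ as $|t|\to\infty$ and its controlled (polynomial) behavior as $|t|\to 0$, which is where the largeness of $\Re(s)$ is used. The meromorphic continuation then follows from the standard argument, as noted, and I would simply cite \cite[Theorem A(a)]{MR1041001}.
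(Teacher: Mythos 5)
Your overall strategy is the same as the paper's: Fourier-expand the left-hand side of \eqref{eq-unfolding-identity} in the coordinate missing from $U$, kill the constant term by cuspidality, and move each nontrivial character to the standard one by conjugating by a rational torus element, yielding $\sum_{\xi\in F^\times}W_{\varphi,\psi_N}(\alpha(\xi)g)$. However, your structural premise contains a genuine error: inside $N\subset\Sp_{2n}$ the entries $u_{1,2}$ and $u_{2n-1,2n}$ are \emph{not} two independent coordinates. The symplectic condition forces $u_{2n-1,2n}=\pm u_{1,2}$; the root subgroup of the simple root $e_1-e_2$ sits in $\Sp_{2n}$ as $x\mapsto I+x(E_{1,2}-E_{2n-1,2n})$, so the two conditions defining $U$ cut out a single codimension-one subgroup of $N$ (this is visible in the $n=2$ case \eqref{eq-Sp(4)xGL(1)-global-integral}, where $U$ is three-dimensional). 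Hence there is exactly one Fourier expansion, over a single $\xi\in F$, along this one-parameter subgroup; there is no double expansion, no double sum over $(\xi_1,\xi_2)\in F^\times\times F^\times$, and no ``collapse'' to verify. The step you flag as the main subtlety rests on this false picture, and as described it would not go through: two genuinely independent expansions would produce a genuine double sum, not a single sum over $F^\times$.

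Relatedly, the conjugation bookkeeping is off. The element that rescales the restored coordinate by $\xi$ while normalizing $U$ and fixing $\psi_N|_U$ is exactly $\alpha(\xi)=\mathrm{diag}(\xi,1,\dots,1,\xi^{-1})$, which multiplies the two linked entries at $(1,2)$ and $(2n-1,2n)$ by the same $\xi$; there is no composition of two cocharacters, and $\mathrm{diag}(\xi,\xi^{-1},1,\dots)$ is not even in $\Sp_{2n}$ (its symplectic completion $\mathrm{diag}(\xi,\xi^{-1},1,\dots,1,\xi,\xi^{-1})$ would in addition rescale the $u_{2,3}$-coordinate and hence distort $\psi_N$ on $U$). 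Finally, be precise about the cuspidality step: what vanishes is the constant term along the \emph{full} unipotent radical of a proper parabolic, not an integral over ``part of'' one or over an arbitrary abelian unipotent subgroup. Here the $\xi=0$ term dies because the restored $u_{1,2}$-integration combines with the first-row coordinates $u_{1,3},\dots,u_{1,2n}$ already present in $U$ to produce, as an inner integration, the constant term along the full (Heisenberg) unipotent radical of the maximal parabolic with Levi $\GL_1\times\Sp_{2n-2}$. With these corrections your outline becomes precisely the paper's single-expansion argument; the final unfolding of the sum over $\xi$ against $\int_{F^\times\backslash\A^\times}$ in your last paragraph is correct and matches the paper.
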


\begin{proof}
Let us fix $g\in \Sp_{2n}(\A)$ and consider the function $F_g$ defined by
\begin{equation*}
    F_g(x_0)=\int\limits_{U(F)\backslash U(\A)} \varphi\left(  \left(\begin{smallmatrix}
    1 & x_0 & 0 & \cdots & 0 & 0 & 0 \\
      & 1   & 0 & \cdots &  0 & 0 & 0 \\
      &    & 1 & \cdots &  0 & 0 & 0 \\
      &     &   & \ddots  &\vdots  & \vdots &\vdots\\
      &     &   &     &1  &0   &0 \\
      &     &   &     &  &1   &-x_0 \\
      &     &   &     &   &     &1
    \end{smallmatrix}\right)
    ug
    \right) \psi_N^{-1}(u)du.
\end{equation*}
This is a function on $F\backslash \A$ and hence it has a Fourier expansion
\begin{equation*}
     F_g(x_0)=\sum_{\xi\in F}\int\limits_{F\backslash \A} \int\limits_{U(F)\backslash U(\A)}  \varphi\left(  \left(\begin{smallmatrix}
    1 & x & 0 & \cdots & 0 & 0 & 0 \\
      & 1   & 0 & \cdots &  0 & 0 & 0 \\
      &    & 1 & \cdots &  0 & 0 & 0 \\
      &     &   & \ddots  &\vdots  & \vdots &\vdots\\
      &     &   &     &1  &0   &0 \\
      &     &   &     &  &1   &-x \\
      &     &   &     &   &     &1
    \end{smallmatrix}\right)
    ug
    \right) \psi_N^{-1}(u)du \psi^{-1}(x\xi) dx \cdot \psi(x_0 \xi).
\end{equation*}
Evaluating $F_g$ at $x_0=0$ and breaking the summation over $\xi\in F$ into two parts where one part corresponds to $\xi=0$ and the other part corresponds to the summation over $\xi\in F^\times$, we obtain
\begin{equation}
\label{eq-proposition-unfolding-eq1}
\begin{split}
    F_g(0) &=  \int\limits_{F\backslash \A} \int\limits_{U(F)\backslash U(\A)}  \varphi\left(  \left(\begin{smallmatrix}
    1 & x & 0 & \cdots & 0 & 0 & 0 \\
      & 1   & 0 & \cdots &  0 & 0 & 0 \\
      &    & 1 & \cdots &  0 & 0 & 0 \\
      &     &   & \ddots  &\vdots  & \vdots &\vdots\\
      &     &   &     &1  &0   &0 \\
      &     &   &     &  &1   &-x \\
      &     &   &     &   &     &1
    \end{smallmatrix}\right)
    ug
    \right) \psi_N^{-1}(u)du  dx \\
    & \, + \sum_{\xi\in F^\times}\int\limits_{F\backslash \A} \int\limits_{U(F)\backslash U(\A)}  \varphi\left(  \left(\begin{smallmatrix}
    1 & x & 0 & \cdots & 0 & 0 & 0 \\
      & 1   & 0 & \cdots &  0 & 0 & 0 \\
      &    & 1 & \cdots &  0 & 0 & 0 \\
      &     &   & \ddots  &\vdots  & \vdots &\vdots\\
      &     &   &     &1  &0   &0 \\
      &     &   &     &  &1   &-x \\
      &     &   &     &   &     &1
    \end{smallmatrix}\right)
    ug
    \right) \psi_N^{-1}(u)du \psi^{-1}(x\xi) dx.
\end{split}
\end{equation}
The first part on the right hand side of \eqref{eq-proposition-unfolding-eq1} contains
\begin{equation*}
\begin{split}
    \int\limits_{(F\backslash \A)^{2n-1}} \varphi\left(  \left(\begin{smallmatrix}
    1 & u_{1,2} & u_{1,3} & \cdots & u_{1,2n-2} & u_{1,2n-1} & u_{1,2n} \\
      & 1   & 0 & \cdots &  0 & 0 & u_{1,2n-1}^* \\
      &    & 1 & \cdots &  0 & 0 & u_{1,2n-2}^* \\
      &     &   & \ddots  &\vdots  & \vdots &\vdots\\
      &     &   &     &1  &0   &u_{1,3}^* \\
      &     &   &     &  &1   &u_{1,2}^* \\
      &     &   &     &   &     &1
    \end{smallmatrix}\right)
    \tilde{u} g
    \right) \prod_{i=2}^{2n} du_{1,i}
\end{split}
\end{equation*}
as an inner integration, which is equal to zero by the cuspidality of $\varphi$. Here, $\tilde{u}$ is the matrix of the form
\begin{equation*}
\left(\begin{smallmatrix}
    1 & 0 & 0 & \cdots & 0 & 0 & 0 \\
      & 1   & u_{2,3} & \cdots &u_{2,2n-2}   &u_{2,2n-1}   & 0 \\
      &    & 1 & \cdots & u_{3,2n-2}  &u_{3,2n-1}  & 0 \\
      &     &   & \ddots  &\vdots  & \vdots &\vdots\\
      &     &   &     &1  &u_{2,3}^*   &0 \\
      &     &   &     &  &1   &0 \\
      &     &   &     &   &     &1
    \end{smallmatrix}\right).
\end{equation*}
So
\begin{equation*}
    \int\limits_{F\backslash \A} \int\limits_{U(F)\backslash U(\A)}  \varphi\left(  \left(\begin{smallmatrix}
    1 & x & 0 & \cdots & 0 & 0 & 0 \\
      & 1   & 0 & \cdots &  0 & 0 & 0 \\
      &    & 1 & \cdots &  0 & 0 & 0 \\
      &     &   & \ddots  &\vdots  & \vdots &\vdots\\
      &     &   &     &1  &0   &0 \\
      &     &   &     &  &1   &-x \\
      &     &   &     &   &     &1
    \end{smallmatrix}\right)
    ug
    \right) \psi_N^{-1}(u)du  dx=0
\end{equation*}
and hence
\begin{equation*}
\begin{split}
    F_g(0) = \sum_{\xi\in F^\times}\int\limits_{F\backslash \A} \int\limits_{U(F)\backslash U(\A)}  \varphi\left(  \left(\begin{smallmatrix}
    1 & x & 0 & \cdots & 0 & 0 & 0 \\
      & 1   & 0 & \cdots &  0 & 0 & 0 \\
      &    & 1 & \cdots &  0 & 0 & 0 \\
      &     &   & \ddots  &\vdots  & \vdots &\vdots\\
      &     &   &     &1  &0   &0 \\
      &     &   &     &  &1   &-x \\
      &     &   &     &   &     &1
    \end{smallmatrix}\right)
    ug
    \right) \psi_N^{-1}(u)du \psi^{-1}(x\xi) dx.
\end{split}
\end{equation*}
Since $\varphi$ is invariant under left multiplication by $\Sp_{2n}(F)$, we have
\begin{equation*}
\begin{split}
    F_g(0) = \sum_{\xi\in F^\times}\int\limits_{F\backslash \A} \int\limits_{U(F)\backslash U(\A)}  \varphi\left(  
   \left(\begin{smallmatrix}
    \xi &  &  &  &  &  & \\
      & 1   &  & &   &  & \\
      &    & 1 &  &   &  &  \\
      &     &   & \ddots  &  &  &\\
      &     &   &     &1  &   & \\
      &     &   &     &  &1   & \\
      &     &   &     &   &     &\xi^{-1}
   \end{smallmatrix}\right)
    \left(\begin{smallmatrix}
    1 & x & 0 & \cdots & 0 & 0 & 0 \\
      & 1   & 0 & \cdots &  0 & 0 & 0 \\
      &    & 1 & \cdots &  0 & 0 & 0 \\
      &     &   & \ddots  &\vdots  & \vdots &\vdots\\
      &     &   &     &1  &0   &0 \\
      &     &   &     &  &1   &-x \\
      &     &   &     &   &     &1
    \end{smallmatrix}\right)
    ug
    \right) \psi_N^{-1}(u)du \psi^{-1}(x\xi) dx.
\end{split}
\end{equation*}
We then use the following matrix identity
\begin{equation*}
   \alpha(\xi)
    \left(\begin{smallmatrix}
    1 & x & 0 & \cdots & 0 & 0 & 0 \\
      & 1   & 0 & \cdots &  0 & 0 & 0 \\
      &    & 1 & \cdots &  0 & 0 & 0 \\
      &     &   & \ddots  &\vdots  & \vdots &\vdots\\
      &     &   &     &1  &0   &0 \\
      &     &   &     &  &1   &-x \\
      &     &   &     &   &     &1
    \end{smallmatrix}\right)
=
        \left(\begin{smallmatrix}
    1 & x\xi & 0 & \cdots & 0 & 0 & 0 \\
      & 1   & 0 & \cdots &  0 & 0 & 0 \\
      &    & 1 & \cdots &  0 & 0 & 0 \\
      &     &   & \ddots  &\vdots  & \vdots &\vdots\\
      &     &   &     &1  &0   &0 \\
      &     &   &     &  &1   &-x\xi \\
      &     &   &     &   &     &1
    \end{smallmatrix}\right)
     \alpha(\xi)
\end{equation*}
and perform a change of variable $x\mapsto x\xi^{-1}$ to get
\begin{equation*}
\begin{split}
    F_g(0) = \sum_{\xi\in F^\times}\int\limits_{F\backslash \A} \int\limits_{U(F)\backslash U(\A)}  \varphi\left(  
    \left(\begin{smallmatrix}
    1 & x & 0 & \cdots & 0 & 0 & 0 \\
      & 1   & 0 & \cdots &  0 & 0 & 0 \\
      &    & 1 & \cdots &  0 & 0 & 0 \\
      &     &   & \ddots  &\vdots  & \vdots &\vdots\\
      &     &   &     &1  &0   &0 \\
      &     &   &     &  &1   &-x \\
      &     &   &     &   &     &1
    \end{smallmatrix}\right)
  \alpha(\xi)
    ug
    \right) \psi_N^{-1}(u)du \psi^{-1}(x) dx.
\end{split}
\end{equation*}
Next, we conjugate $\alpha(\xi)$ to the right of the matrix $u$. Notice that the conjugation by $\alpha(\xi)$ does not affect the character $\psi_N^{-1}$, i.e., $\psi_N^{-1}(\alpha(\xi)u\alpha(\xi)^{-1})=\psi_N^{-1}(u)$. After a change of variables in $u$, we obtain
\begin{equation*}
\begin{split}
    F_g(0)  &= \sum_{\xi\in F^\times}\int\limits_{F\backslash \A} \int\limits_{U(F)\backslash U(\A)}  \varphi\left(  
    \left(\begin{smallmatrix}
    1 & x & 0 & \cdots & 0 & 0 & 0 \\
      & 1   & 0 & \cdots &  0 & 0 & 0 \\
      &    & 1 & \cdots &  0 & 0 & 0 \\
      &     &   & \ddots  &\vdots  & \vdots &\vdots\\
      &     &   &     &1  &0   &0 \\
      &     &   &     &  &1   &-x \\
      &     &   &     &   &     &1
    \end{smallmatrix}\right)
    u   \alpha(\xi)g
    \right) \psi_N^{-1}(u)du \psi^{-1}(x) dx.
\end{split}
\end{equation*}
Hence
\begin{equation*}
\begin{split}
    F_g(0)  &= \sum_{\xi\in F^\times} \int\limits_{N(F)\backslash N(\A)} \varphi(n \alpha(\xi)g)\psi_N^{-1}(n)dn = \sum_{\xi\in F^\times} W_{\varphi, \psi_N}(\alpha(\xi)g).
\end{split}
\end{equation*}
Finally, notice that $F_g(0)$ is equal to the left hand side of \eqref{proposition-unfolding-identity}. This completes the proof.
\end{proof}

Now we are ready to finish the proof of Theorem~\ref{theorem-global-unfolding}.

\begin{proof}[Proof of Theorem~\ref{theorem-global-unfolding}]
By Proposition~\ref{proposition-unfolding-identity}, we have
\begin{equation*}
\begin{split}
\mathcal{Z}(s, \varphi, \chi) &=\int\limits_{F^\times\backslash \A^\times} \int\limits_{U(F)\backslash U(\A)}\varphi( u\alpha(t)) \psi_N^{-1}(u)  \chi(t) |t|^{2s-n-\frac{1}{2}} dud^\times t\\
&=\int\limits_{F^\times\backslash \A^\times} \sum_{\xi\in F^\times} W_{\varphi,\psi_N}(\alpha(\xi) \alpha(t)) \chi(t)|t|^{2s-n-\frac{1}{2}}d^\times t.
\end{split}
\end{equation*}
For any $\xi\in F^\times$, we have $\chi(t)|t|^{2s-n-\frac{1}{2}}=\chi(\xi t)|\xi t|^{2s-n-\frac{1}{2}}$. Then
\begin{equation*}
\begin{split}
\mathcal{Z}(s, \varphi, \chi) =\int\limits_{F^\times\backslash \A^\times} \sum_{\xi\in F^\times} W_{\varphi,\psi_N}(\alpha(\xi t)) \chi(\xi t)|\xi t|^{2s-n-\frac{1}{2}}d^\times t.
\end{split}
\end{equation*}
Finally we collapse the summation with the integration to get
\begin{equation*}
\begin{split}
\mathcal{Z}(s, \varphi, \chi) =  \int\limits_{\A^\times} W_{\varphi,\psi_N}(\alpha(t)) \chi(t) |t|^{2s-n-\frac{1}{2}} d^\times t.   
\end{split}
\end{equation*}
\end{proof}

\section{Local unramified computation}
\label{section-local-computation}

In this section, we compute the local unramified integral corresponding to the global integral \eqref{eq-unfolding} in Theorem~\ref{theorem-global-unfolding}. Let $F$ be a non-archimedean local field with ring $\mathcal{O}$ of integers, with a fixed uniformizer $\varpi$. Let $q$ be the cardinality of the residue field. The absolute value $|\cdot|$ on $F$ is normalized so that $|\varpi|=q^{-1}$. Let $\psi$ be a non-trivial additive unramified character of $F$. We assume $\pi$ is an irreducible admissible unramified generic representation of $\Sp_{2n}(F)$, with Satake parameters $a_1, a_2, \cdots, a_n\in \C^\times$.
By assumption, there is a unique unramified Whittaker function $W_\pi^0\in \mathcal{W}(\pi, \psi_N)$, where $\mathcal{W}(\pi, \psi_N)$ is the space of Whittaker functions associated to $\pi$ with respect to the character $\psi_N$, so that $W_\pi^0(k)=W_\pi^0(I_{2n})=1$ for any $k\in \Sp_{2n}(\mathcal{O})$.
We assume $\chi$ is an unramified character of $F^\times$. The local $L$-function associated to $\chi$ is
\begin{equation*}
    L(s,\chi)=(1-\chi(\varpi)q^{-s})^{-1}.
\end{equation*}
Note that the $L$-group of $\Sp_{2n}$ is $\SO_{2n+1}(\C)$. Let 
\begin{equation*}
    A=\mathrm{diag}(a_1, \cdots, a_n, 1, a_n^{-1}, \cdots, a_1^{-1})
\end{equation*}
be the semisimple conjguacy class in $\SO_{2n+1}(F)$. Then the local twisted standard $L$-function for $\pi\times\chi$ is
\begin{equation*}
    L(s,\pi\times \chi)=\mathrm{det}(1-A \chi(\varpi)q^{-s})^{-1}.
\end{equation*}

We normalize the Haar measure on the multiplicative group $F^\times$ so that $\int\limits_{\mathcal{O}^\times}d^\times t=1$.
Let
\begin{equation}
\label{eq-local-unramified-integral}
    \mathcal{Z}(s,W_\pi^0, \chi)=\int\limits_{F^\times}W_\pi^0(\alpha(t))\chi(t)|t|^{2s-n-\frac{1}{2}}d^\times t.
\end{equation}
This is the local unramified integral corresponding to \eqref{eq-unfolding}. We have the following result on the local unramified integral.

\begin{proposition}
\label{proposition-local-unramified-computation}
For $\Re(s)\gg 0$, we have
\begin{equation*}
\mathcal{Z}(s,W_\pi^0, \chi)=\frac{L(2s-\frac{1}{2}, \pi\times\chi)}{L(4s-1, \chi)}.
\end{equation*}
\end{proposition}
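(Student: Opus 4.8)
The plan is to reduce the computation of $\mathcal{Z}(s, W_\pi^0, \chi)$ to a sum over the Satake parameters via the Casselman--Shalika formula. First I would observe that the integrand $W_\pi^0(\alpha(t))\chi(t)|t|^{2s-n-\frac12}$ is invariant under $t\mapsto ut$ for $u\in\mathcal{O}^\times$ (since $\chi$, $\psi$ are unramified, $W_\pi^0$ is right $\Sp_{2n}(\mathcal{O})$-invariant, and $\alpha(u)\in\Sp_{2n}(\mathcal{O})$), so by the normalization of the Haar measure the integral collapses to a sum over $k\in\Z$:
\begin{equation*}
\mathcal{Z}(s, W_\pi^0, \chi) = \sum_{k\in\Z} W_\pi^0(\alpha(\varpi^k)) \chi(\varpi)^k q^{-k(2s-n-\frac12)}.
\end{equation*}
Next, since $\alpha(\varpi^k)$ lies in the maximal torus of $\Sp_{2n}$, I would identify it with the dominant (or its relevant Weyl translate) coweight and apply the Casselman--Shalika formula, which expresses $W_\pi^0$ evaluated at a torus element in terms of a Weyl character of the dual group $\SO_{2n+1}(\C)$ evaluated at the Satake parameter $A = \mathrm{diag}(a_1,\dots,a_n,1,a_n^{-1},\dots,a_1^{-1})$, together with a modulus factor. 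Note that $\alpha(\varpi^k)$ corresponds to a coweight that is \emph{not} regular dominant (only the first coordinate is non-trivial), so $W_\pi^0(\alpha(\varpi^k))$ vanishes for $k<0$ and for $k\geq 0$ equals $q^{-k(n+\frac12)\cdot\text{(something)}}$ times the value $s_{k\omega}(A)$ of a Schur-type polynomial (the character of the irreducible $\SO_{2n+1}(\C)$-representation with highest weight $k$ times the first fundamental coweight's dual), up to the correct power of $q$ coming from the half-sum of positive roots.

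Then the core identity to establish is a generating-function identity: $\sum_{k\geq 0} s_{k\omega}(A)\, X^k = \dfrac{1-X^2}{\det(1-AX)}$ where $X = \chi(\varpi)q^{-(2s-\frac12)}$ (after absorbing the modulus factors into the exponent of $q$, which is exactly the bookkeeping the shift $|t|^{2s-n-\frac12}$ is designed to produce). The right-hand side is precisely $L(2s-\frac12,\pi\times\chi)/L(4s-1,\chi)$, since $\det(1-AX)^{-1} = L(2s-\frac12,\pi\times\chi)$ and $1-X^2 = (1-\chi(\varpi)^2 q^{-(4s-1)}) = L(4s-1,\chi^2)^{-1}$ --- here I should double-check whether the denominator in the statement should read $L(4s-1,\chi^2)$ rather than $L(4s-1,\chi)$, or whether the conventions in the paper make these agree; this is a point to verify carefully. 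The generating-function identity itself I would prove either by a direct branching/Pieri argument for $\SO_{2n+1}$ (the relevant restriction multiplicities from $\SO_{2n+1}$ to $\SO_{2n-1}$, telescoping), or by matching it against the known unramified computation in Ginzburg--Rallis--Soudry \cite{MR1675971}, as the excerpt already remarks that this proposition can be deduced from that work.

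The main obstacle I anticipate is the precise normalization bookkeeping: getting the modulus character $\delta_B^{1/2}$ contribution at $\alpha(\varpi^k)$ exactly right, confirming that the shift by $n+\frac12$ in the exponent of $|t|$ is the correct one to land on the clean form above, and resolving the $\chi$ versus $\chi^2$ discrepancy in the denominator. The representation-theoretic input (Casselman--Shalika, and the generating function for these ``one-row'' $\SO_{2n+1}$ characters) is standard; the delicate part is the arithmetic of the exponents and ensuring consistency with the global integral's normalization in Theorem~\ref{theorem-global-unfolding}.
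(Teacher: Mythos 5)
Your plan is, in substance, the computation that the paper delegates to a citation: the paper's own proof of this proposition is a one-line reference to the unramified computation in \cite{MR1675971} (the last integral on p.~196 there, with $k=1$), which is carried out via the Casselman--Shalika formula, and your proposal is a self-contained version of exactly that argument. The individual steps are sound: unramifiedness of $W_\pi^0$, $\chi$, $\psi$ and the measure normalization collapse the integral to $\sum_{k\in\Z}W_\pi^0(\alpha(\varpi^k))\chi(\varpi)^k q^{-k(2s-n-\frac12)}$; the Whittaker value vanishes for $k<0$ since the coweight $(k,0,\dots,0)$ is then non-dominant; for $k\ge 0$ Casselman--Shalika gives $W_\pi^0(\alpha(\varpi^k))=\delta_B^{1/2}(\alpha(\varpi^k))\,\mathrm{tr}\,\rho_{k\epsilon_1}(A)$, and since the positive roots of $\Sp_{2n}$ pair with this coweight to give $\delta_B(\alpha(t))=|t|^{2n}$, the modulus factor is exactly $q^{-nk}$ (not the $q^{-k(n+\frac12)\cdot(\cdot)}$ you tentatively wrote, but your subsequent bookkeeping is right), so the sum becomes $\sum_{k\ge0}\mathrm{tr}\,\rho_{k\epsilon_1}(A)\,X^k$ with $X=\chi(\varpi)q^{-(2s-\frac12)}$; and the one-row generating identity $\sum_{k\ge0}\mathrm{tr}\,\rho_{k\epsilon_1}(A)X^k=(1-X^2)\det(1-AX)^{-1}$ follows from $\mathrm{Sym}^k\C^{2n+1}\cong\rho_{k\epsilon_1}\oplus\mathrm{Sym}^{k-2}\C^{2n+1}$ for $\SO_{2n+1}(\C)$, as you indicate.

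The one point you flagged but left open --- $\chi$ versus $\chi^2$ in the denominator --- does not go away, and a finished proof must resolve it: carried to the end, your computation gives $1-X^2=1-\chi(\varpi)^2q^{-(4s-1)}=L(4s-1,\chi^2)^{-1}$, hence $\mathcal{Z}(s,W_\pi^0,\chi)=L(2s-\tfrac12,\pi\times\chi)\,L(4s-1,\chi^2)^{-1}$. With the paper's definitions $L(s,\chi)=(1-\chi(\varpi)q^{-s})^{-1}$ and $L(s,\pi\times\chi)=\det(1-A\chi(\varpi)q^{-s})^{-1}$, this coincides with the displayed formula only when $\chi(\varpi)=1$, i.e.\ for trivial unramified $\chi$ (a quick $n=1$ check with $A=\mathrm{diag}(a,1,a^{-1})$ confirms the factor is $1-\chi(\varpi)^2q^{-(4s-1)}$). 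So your method cannot land on the statement as literally printed for general unramified unitary $\chi$; the denominator should read $L(4s-1,\chi^2)$ (and correspondingly $L^S(4s-1,\chi^2)$ in Theorem~\ref{theorem-main}), which is also what one expects from \cite{MR1675971}, where the $\GL_1$-twist normalizing factor arises from a metaplectic Eisenstein series and involves the square of the character. In short: your approach is correct and matches the computation underlying the paper's citation, but to be complete you must carry out the exponent bookkeeping you postponed and state the conclusion with $\chi^2$, noting the discrepancy with the printed proposition rather than leaving it as a point ``to verify.''
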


\begin{proof}
This follows from the local unramified computation in \cite{MR1675971}. Indeed, the integral $\mathcal{Z}(s,W_\pi^0, \chi)$ in \eqref{eq-local-unramified-integral} is exactly the last integral in \cite[pp. 196]{MR1675971} in the special case when $k=1$, and the integral was computed by using the Casselman-Shalika formula \cite{MR581582}.
\end{proof}

\begin{remark}
We remark that the definition of the local unramified integral in \cite{MR1675971} involves an unramified section of certain induced representation. After applying the Iwasawa decomposition for $\SL_2$ and some computation,
the local unramified integral (for $\Sp_{2n}\times\GL_1$) in \cite{MR1675971} coincide with our integral $\mathcal{Z}(s,W_\pi^0, \chi)$ in \eqref{eq-local-unramified-integral}.
\end{remark}

\begin{remark}
We would like to point out that when $\chi=1$ is the trivial character, the integral $\mathcal{Z}(s,W_\pi^0, \chi)$ was also computed in \cite[pp. 94--95]{MR1454260}.
\end{remark}

\section{Local non-vanishing results}

In this section, we prove that, given any $s_0\in \C$, there is a choice of data so that the local integrals are non-zero at $s_0$.

Let $F$ be a local field, which can be archimedean or non-archimedean. Let $\pi$ be an irreducible generic representation of $\Sp_{2n}(F)$, and let $\chi$ be a unitary character of $F^\times$. The space of Whittaker functions for $\pi$ is denoted by $\mathcal{W}(\pi, \psi_N)$. Our local integral is
\begin{equation}
    \mathcal{Z}(s, W_\pi, \chi)= \int\limits_{F^\times}W_\pi(\alpha(t))\chi(t)|t|^{2s-n-\frac{1}{2}}d^\times t
\end{equation}
where $W_\pi\in \mathcal{W}(\pi, \psi_N)$. 
We have the following results.

\begin{proposition}
\label{proposition-local-non-vanishing}
\begin{enumerate}[(i)]
    \item When $\Re(s)\gg 0$, the integral $\mathcal{Z}(s, W_\pi, \chi)$ converges absolutely.
    \item The integral $\mathcal{Z}(s, W_\pi, \chi)$ admits a meromorphic continuation to the entire complex plane. 
    
    \item Given $s_0\in \C$, there is a choice of data so that $\mathcal{Z}(s, W_\pi, \chi)$ is non-zero at $s_0$.
\end{enumerate}
\end{proposition}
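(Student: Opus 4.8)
The plan is to treat the three parts in sequence, as parts (i) and (ii) are preparatory and part (iii) is the real content. For part (i), I would use the fact that a Whittaker function $W_\pi \in \mathcal{W}(\pi,\psi_N)$ restricted to the torus has a well-understood asymptotic behavior: along the cocharacter $t \mapsto \alpha(t)$, the gauge estimates (in the non-archimedean case, vanishing of $W_\pi(\alpha(t))$ for $|t|$ large by the equivariance under $N$; in the archimedean case, rapid decay) force $W_\pi(\alpha(t))$ to be supported in $|t| \le C$ and to be bounded by a power of $|t|$ as $|t| \to 0$. Plugging this into the integral $\int_{F^\times} W_\pi(\alpha(t))\chi(t)|t|^{2s-n-\frac12}\,d^\times t$ shows absolute convergence once $\Re(s)$ is large enough that the power of $|t|$ dominates near $t = 0$; the large-$|t|$ end is harmless. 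This is entirely standard, and I would cite a gauge estimate (e.g.\ from the theory of Whittaker functions on reductive groups, as in Jacquet--Shalika or the references already in the paper) rather than reprove it.

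For part (ii), the meromorphic continuation, in the non-archimedean case $W_\pi(\alpha(t))$ is a finite $\C$-linear combination of terms of the form $|t|^{\lambda}$ times a polynomial in $v(t)$, for $|t|$ in the support, so the integral is literally a finite sum of geometric-type series in $q^{-s}$ and the continuation is rational in $q^{-s}$. In the archimedean case one invokes the standard argument (Bernstein's continuation principle, or direct integration by parts using the moderate-growth expansion of $W_\pi$ near $t=0$) to get a meromorphic function of $s$. Again this is routine and I would keep it brief.

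The heart of the matter is part (iii). The strategy is the familiar ``choose a Whittaker function concentrated near the identity'' argument. Fix $s_0 \in \C$. By the meromorphic continuation from part (ii), the value $\mathcal{Z}(s_0, W_\pi,\chi)$ is a well-defined linear functional of $W_\pi$ (after clearing any pole, i.e.\ working with $(s-s_0)^m \mathcal{Z}(s,W_\pi,\chi)$ evaluated at $s_0$ if there is a pole of order $m$). It suffices to exhibit one $W_\pi$ for which this functional is nonzero. I would pick $W_\pi$ whose restriction to the image of $\alpha$ is supported in an arbitrarily small neighborhood of $t = 1$: in the non-archimedean case, use the fact that the Whittaker model contains functions supported on $N(F)\,T_0\,K'$ for $K'$ a small compact open subgroup (Jacquet--Piatetski-Shapiro--Shalika type density, or simply the smooth vectors realizing an approximate delta function at the identity coset); in the archimedean case, use that $\mathcal{W}(\pi,\psi_N)$ is stable under right translation by $\Sp_{2n}(F)$ and contains vectors whose restriction to $\alpha(F^\times)$ approximates a bump function at $t=1$. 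For such $W_\pi$, on the support $|t|^{2s_0-n-\frac12}$ and $\chi(t)$ are essentially constant, so $\mathcal{Z}(s_0,W_\pi,\chi) \approx W_\pi(\alpha(1)) \cdot (\text{positive volume}) \ne 0$ provided $W_\pi(I_{2n}) \ne 0$, which we arrange.

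The main obstacle is making the ``bump function at $t = 1$'' step rigorous, i.e.\ proving that the restriction map $W_\pi \mapsto W_\pi\!\mid_{\alpha(F^\times)}$ has image containing functions supported near $t=1$ with prescribed value there. In the non-archimedean case this follows from the smoothness of vectors in $\mathcal{W}(\pi,\psi_N)$ together with the Iwasawa decomposition, since translating $W^0_\pi$ (or any vector with $W_\pi(I_{2n})\neq 0$) by a small compact-open subgroup produces the required localized functions; the key point is that $\alpha(F^\times)$ meets the ``big cell'' transversally enough that local behavior near $\alpha(1)$ can be prescribed independently. In the archimedean case the analogous statement uses that $\pi$ has a Whittaker functional and smooth vectors can be chosen to concentrate; here one should be slightly careful about convergence of the resulting integral, but since the chosen $W_\pi$ has compact support in $t$ away from $0$ this is immediate and in fact holds for all $s$, which simultaneously disposes of the pole issue (for this particular $W_\pi$ the integral is entire, so evaluation at $s_0$ is literally the value). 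I would structure the write-up so that this compactly-supported choice is made once and serves all three parts at the relevant point.
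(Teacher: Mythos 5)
The paper itself disposes of this proposition by citing \cite{MR1675971} (Lemmas 3.4, 3.5 and Proposition 3.6), where (i)--(ii) come from the asymptotic expansion of Whittaker functions on the torus and (iii) is proved by contradiction: if the (meromorphically continued) integrals vanished at $s_0$ for every choice of data, one deduces $W_\pi(I_{2n})\chi(1)=0$ for all $W_\pi\in\mathcal{W}(\pi,\psi_N)$, contradicting genericity. Your sketches of (i) and (ii) are consistent with that route and are fine as citations. The problem is your part (iii), and specifically the step you yourself flag as the main obstacle: producing $W_\pi$ whose restriction to $\alpha(F^\times)$ is supported near $t=1$. The mechanism you offer --- ``smoothness plus the Iwasawa decomposition, translating by a small compact-open subgroup'' --- does not do this. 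Right-invariance under a small compact open subgroup only gives local constancy of $t\mapsto W_\pi(\alpha(t))$; it gives no control whatsoever on the support, and in particular it cannot cut off small $|t|$. The correct non-archimedean device is different: use the root subgroup $x(r)$ with entry $r$ in position $(1,2)$ (and its companion entry), which is normalized by $\alpha(F^\times)$ with $\alpha(t)x(r)\alpha(t)^{-1}=x(tr)$ and on which $\psi_N(x(r))=\psi(r)$, and set $W'(g)=\int_{\mathfrak{p}^{-m}}W(g\,x(r))\psi^{-1}(r)\,dr$. Then $W'\in\mathcal{W}(\pi,\psi_N)$ and $W'(\alpha(t))=W(\alpha(t))\int_{\mathfrak{p}^{-m}}\psi((t-1)r)\,dr$, which vanishes unless $|t-1|\le q^{-m}$; this is an averaging against a character over a \emph{large} compact piece of a unipotent subgroup, not a translation by a small subgroup, and it genuinely rescues your non-archimedean argument.

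In the archimedean case the gap is more serious. The analogous construction (Dixmier--Malliavin), $W'(g)=\int_{F}W(g\,x(r))\phi(r)\,dr$ with $\phi\in C_c^\infty(F)$, yields $W'(\alpha(t))=W(\alpha(t))\widehat{\phi}(t)$ with $\widehat{\phi}$ a Fourier transform, which is Schwartz but never compactly supported; so your claim that one can choose a smooth vector whose restriction to $\alpha(F^\times)$ is literally supported in a compact neighborhood of $t=1$ is unjustified, and with it falls the assertion that ``for this particular $W_\pi$ the integral is entire, so evaluation at $s_0$ is literally the value.'' Without compact support away from $t=0$ you are back to facing convergence at an arbitrary $s_0$ and possible poles of the continuation, which is precisely why \cite{MR1675971} proceeds instead via the asymptotic expansion of archimedean Whittaker functions (giving (i) and (ii)) and then proves (iii) by the contradiction argument with the continued integrals, rather than by exhibiting a single bump-like vector. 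To repair your write-up you should either adopt that argument at the archimedean places, or give a genuine proof (not an assertion) that the restriction map to $\alpha(F^\times)$ hits functions with the support property you need --- which is not a standard fact for $\Sp_{2n}$.
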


\begin{proof}
This follows from \cite[Lemma 3.4, Lemma 3.5, Proposition 3.6]{MR1675971}.
\end{proof}

\begin{remark}
We remark that Proposition~\ref{proposition-local-non-vanishing}(i) was proved by the asymptotic expansion for Whittaker functions $W_\pi\in \mathcal{W}(\pi, \psi_N)$; see \cite[Lemma 3.3]{MR1675971}. Proposition~\ref{proposition-local-non-vanishing}(iii) was  proved by contradiction: if we assume that $\mathcal{Z}(s, W_\pi, \chi)$ is zero for all choices of data, then one can deduce that $W_\pi(I_{2n})\chi(1)$ is zero for all Whittaker functions $W_\pi\in \mathcal{W}(\pi, \psi_N)$ and unitary characters $\chi$, which would lead to a contradiction.
\end{remark}

\section{Proof of Theorem~\ref{theorem-main}}

We now prove Theorem~\ref{theorem-main}.

\begin{proof}[Proof of Theorem~\ref{theorem-main}]
Choose $S$ as in Theorem~\ref{theorem-main} such that for all $v\not\in S$, $\xi_v=\xi_v^0$ is the unramified vector in $V_{\pi_v}$.
By Theorem~\ref{theorem-global-unfolding}, we have
\begin{equation*}
\begin{split}
     \mathcal{Z}(s, \varphi, \chi)
            =  \int\limits_{\A^\times} W_{\varphi,\psi_N}(\alpha(t)) \chi(t) |t|^{2s-n-\frac{1}{2}} d^\times t    = \prod_{v}  \int\limits_{F_v^\times} W_{\xi_v,\psi_N}(\alpha(t_v)) \chi_v(t_v) |t_v|^{2s-n-\frac{1}{2}} d^\times t_v .
\end{split}
\end{equation*}
By Proposition~\ref{proposition-local-unramified-computation}, we know that for $v\not\in S$, we have
\begin{equation*}
    \int\limits_{F_v^\times} W_{\xi_v^0,\psi_N}(\alpha(t_v)) \chi_v(t_v) |t_v|^{2s-n-\frac{1}{2}} d^\times t_v = \frac{L(2s-\frac{1}{2}, \pi_v\times\chi_v)}{L(4s-1, \chi_v)}.
\end{equation*}
Denote
\begin{equation*}
    \mathcal{Z}_S(s, \varphi, \chi)= \prod_{v\in S}  \int\limits_{F_v^\times} W_{\xi_v,\psi_N}(\alpha(t_v)) \chi_v(t_v) |t_v|^{2s-n-\frac{1}{2}} d^\times t_v .
\end{equation*}
Hence we have 
\begin{equation*}
\begin{split}
     \mathcal{Z}(s, \varphi, \chi)
             =& \prod_{v\not\in S}  \int\limits_{F_v^\times} W_{\xi_v,\psi_N}(\alpha(t_v)) \chi_v(t_v) |t_v|^{2s-n-\frac{1}{2}} d^\times t_v  \cdot   \mathcal{Z}_S(s, \varphi, \chi) \\
            =& \frac{L^S(2s-\frac{1}{2}, \pi\times\chi)}{L^S(4s-1, \chi)}\cdot \mathcal{Z}_S(s, \varphi, \chi).
\end{split}
\end{equation*}
The claimed properties of $\mathcal{Z}_S(s, \varphi, \chi)$ follow from Proposition~\ref{proposition-local-non-vanishing}. This completes the proof of Theorem~\ref{theorem-main}.
\end{proof}

\section*{Acknowledgements} 

This work was completed while I was a PhD student at The Ohio State University. I thank my advisor Jim Cogdell for his support and encouragement.

\def\cprime{$'$} \def\Dbar{\leavevmode\lower.6ex\hbox to 0pt{\hskip-.23ex
  \accent"16\hss}D} \def\cftil#1{\ifmmode\setbox7\hbox{$\accent"5E#1$}\else
  \setbox7\hbox{\accent"5E#1}\penalty 10000\relax\fi\raise 1\ht7
  \hbox{\lower1.15ex\hbox to 1\wd7{\hss\accent"7E\hss}}\penalty 10000
  \hskip-1\wd7\penalty 10000\box7}
  \def\polhk#1{\setbox0=\hbox{#1}{\ooalign{\hidewidth
  \lower1.5ex\hbox{`}\hidewidth\crcr\unhbox0}}} \def\dbar{\leavevmode\hbox to
  0pt{\hskip.2ex \accent"16\hss}d}
  \def\cfac#1{\ifmmode\setbox7\hbox{$\accent"5E#1$}\else
  \setbox7\hbox{\accent"5E#1}\penalty 10000\relax\fi\raise 1\ht7
  \hbox{\lower1.15ex\hbox to 1\wd7{\hss\accent"13\hss}}\penalty 10000
  \hskip-1\wd7\penalty 10000\box7}
  \def\ocirc#1{\ifmmode\setbox0=\hbox{$#1$}\dimen0=\ht0 \advance\dimen0
  by1pt\rlap{\hbox to\wd0{\hss\raise\dimen0
  \hbox{\hskip.2em$\scriptscriptstyle\circ$}\hss}}#1\else {\accent"17 #1}\fi}
  \def\bud{$''$} \def\cfudot#1{\ifmmode\setbox7\hbox{$\accent"5E#1$}\else
  \setbox7\hbox{\accent"5E#1}\penalty 10000\relax\fi\raise 1\ht7
  \hbox{\raise.1ex\hbox to 1\wd7{\hss.\hss}}\penalty 10000 \hskip-1\wd7\penalty
  10000\box7} \def\lfhook#1{\setbox0=\hbox{#1}{\ooalign{\hidewidth
  \lower1.5ex\hbox{'}\hidewidth\crcr\unhbox0}}}
\providecommand{\bysame}{\leavevmode\hbox to3em{\hrulefill}\thinspace}
\providecommand{\MR}{\relax\ifhmode\unskip\space\fi MR }
\providecommand{\MRhref}[2]{%
  \href{http://www.ams.org/mathscinet-getitem?mr=#1}{#2}
}
\providecommand{\href}[2]{#2}


\begin{thebibliography}{99}


\bibitem[Bum89]{MR993311}
Bump, Daniel,
\emph{The {R}ankin-{S}elberg method: a survey},
   Number theory, trace formulas and discrete groups ({O}slo,
              1987), 49--109, Academic Press, Boston, MA, 1989. \MR{993311}


\bibitem[CS80]{MR581582}
Casselman, W. and Shalika, J.,
\emph{The unramified principal series of {$p$}-adic groups. {II}. {T}he {W}hittaker function},
   Compositio Math., \textbf{41},
   (1980), no.~2, 207--231. \MR{581582}

\bibitem[Cog04]{MR2071506}
Cogdell, James W., \emph{Lectures on {$L$}-functions, converse theorems, and
              functoriality for {${\rm GL}_n$}}, Lectures on automorphic {$L$}-functions, Fields Inst. Monogr., Vol. 20, 1--96, Amer. Math. Soc., Providence, RI, 2004. \MR{2071506}  

\bibitem[Hec36]{MR1513069}
Hecke, E.,
\emph{\"{U}ber die Bestimmung Dirichletscher Reihen durch ihre
   Funktionalgleichung},
   Math. Ann., \textbf{112},
   (1936), no.~1, 664--699. \MR{1513069}


\bibitem[Gin90]{MR1041001}
Ginzburg, David,
\emph{{$L$}-functions for {${\rm SO}_n\times {\rm GL}_k$}}, J. Reine Angew. Math., \textbf{405} (1990), 156--180. \MR{1041001}


\bibitem[GRS97]{MR1454260}
David Ginzburg, Stephen Rallis, and David Soudry, \emph{Periods, poles of $L$-functions and symplectic-orthogonal theta
   lifts}, J. Reine Angew. Math., \textbf{487} (1997), 85--114. \MR{1454260}

\bibitem[GRS98]{MR1675971}
David Ginzburg, Stephen Rallis, and David Soudry, \emph{$L$-functions for symplectic groups}, Bull. Soc. Math. France, \textbf{126} (1998), no.~2, 181--244. \MR{1675971}
  
\bibitem[JL70]{MR0401654}
Jacquet, H. and Langlands, R. P., \emph{Automorphic forms on {${\rm GL}(2)$}}, Lecture Notes in Mathematics, Vol. 114, Springer-Verlag, Berlin-New York, 1970. \MR{0401654}  
  


\bibitem[JS81A]{MR618323}
Jacquet, H., Shalika, J. A., \emph{On Euler products and the classification of automorphic
   representations. I},  Amer. J. Math., \textbf{103}, no.~3, 499--558, 1981. \MR{618323}


\bibitem[JS81B]{MR623137}
Jacquet, H., Shalika, J. A., \emph{On Euler products and the classification of automorphic
   representations. II},  Amer. J. Math., \textbf{103}, no.~4, 777--815, 1981. \MR{623137}


\bibitem[Nov79]{MR546610}
Novodvorsky, Mark E., \emph{Automorphic {$L$}-functions for symplectic group {${\rm
              GSp}(4)$}}, Automorphic forms, representations and {$L$}-functions
              ({P}roc. {S}ympos. {P}ure {M}ath., {O}regon {S}tate {U}niv.,
              {C}orvallis, {O}re., 1977), {P}art 2, Proc. Sympos. Pure Math., XXXIII, 87--95, Amer. Math. Soc., Providence, R.I., 1979. \MR{546610} 


\bibitem[Sha74]{MR348047}
Shalika, J. A., \emph{The multiplicity one theorem for ${\rm GL}_{n}$},         Ann. of Math. (2), volume 100, 171--193, 1974. \MR{348047} 







\end{thebibliography}
\end{document}